\def\R{{\mathbb R}}
\def\Z{\mathbb Z}
\newcommand{\Id}{\operatorname{Id}\nolimits}
\def\eps{\varepsilon}
\def\lam{\lambda}
\def\f{\varphi}
\def\cut{\operatorname{cut}}
\def\conj{\operatorname{conj}}
\def\Cut{\operatorname{Cut}}
\def\Conj{\operatorname{Conj^1}}
\def\Conjo{\operatorname{Conj^1_0}}
\def\Conje{\operatorname{Conj^1_{\eps}}}
\def\tconj{t_{\conj}^1}
\def\tcut{t_{\cut}}
\def\const{\operatorname{const}}
\def\Exp{\operatorname{Exp}}
\renewcommand{\leq}{\leqslant}
\renewcommand{\geq}{\geqslant}
\newcommand{\pder}[2]{\frac{\partial \, #1}{\partial \, #2} }
\newcommand{\restr}[2]{\left. #1 \right|_{#2}}
\newcommand{\map}[3]{#1 \, : \, #2 \to #3}
\newcommand{\eq}[1]{$(\protect\ref{#1})$}
\newcommand{\be}[1]{\begin{equation}\label{#1}}
\newcommand{\ee}{\end{equation}}
\def\then{\quad\Rightarrow\quad}
\theoremstyle{plain}
\newtheorem{theorem}{Теорема}
\newtheorem{proposition}{Предложение}
\newtheorem{corollary}{Следствие}
\theoremstyle{remark}
\newtheorem{remark}{Замечание}
\newcommand{\onefiglabelsizen}[4]
{
\begin{figure}[htbp]
\begin{center}
\includegraphics[height=#4cm]{#1}
\\
\parbox[t]{0.7\textwidth}{\caption{#2}\label{#3}}
\end{center}
\end{figure}
}
\newcommand{\twofiglabelsize}[8]
{
\begin{figure}[htbp]
\includegraphics[height=#4\textwidth]{#1}
\hfill
\includegraphics[height=#8\textwidth]{#5}
\\
\hfill
\parbox[t]{0.45\textwidth}{\caption{#2}\label{#3}}
\hfill
\parbox[t]{0.45\textwidth}{\caption{#6}\label{#7}}
\hfill
\end{figure}
}
\begin{document}

\title{
Семейство римановых задач на группе Гейзенберга
}

\author{Ю.Л.~Сачков}
\date{}

\maketitle

\begin{abstract}
Рассматривается однопараметрическое семейство римановых задач на группе Гейзенберга, сходящееся к субримановой задаче на этой группе при стремлении параметра к предельному значению. Для семейства построен оптимальный синтез, описаны сферы, множество разреза и первая каустика. При стремлении параметра к предельному значению показано, как экспоненциальное отображение, первая каустика, множество разреза и сферы стремятся к соответствующим объектам для субримановой задачи.
\end{abstract}

\tableofcontents

\section{Введение}

Как известно, субриманову структуру можно получить как предел подходящего семейства римановых структур, когда движение допустимых кривых сильно штрафуется в направлениях, трансверсальных распределению субримановой структуры. Цель данной работы --- рассмотреть этот предельный переход в случае левоинвариантных структур на группе Гейзенберга. Для этого в разделах 2--9 исследуется семейство римановых задач $P_{\eps}$, $\eps > 0$, на группе Гейзенберга:  методами геометрической теории управления строится оптимальный синтез, описаны первая каустика, множество разреза, радиус инъективности, сферы, а также исследуется гладкость римановых сфер и расстояния. В разделе 10 рассматривается субриманова задача $P_0$ на группе Гейзенберга  и исследуется сходимость существенных объектов задач $P_{\eps}$ к соответствующим объектам задачи $P_0$ при $\eps \to 0$.

\section{Постановка задачи}
Пусть $M = \R^3_{x, y, z}$ есть группа Гейзенберга с законом умножения 
\begin{align*}
\begin{pmatrix}
x_1\\
y_1\\
z_1
\end{pmatrix} \cdot
\begin{pmatrix}
x_2\\
y_2\\
z_2
\end{pmatrix} = 
\begin{pmatrix}
x_1 + x_2\\
y_1 + y_2\\
z_1 + z_2 + (x_1 y_2 - x_2 y_1)/2
\end{pmatrix}.
\end{align*}
Векторные поля
$$
X_1 = \pder{}{x} - \frac{y}{2}\pder{}{z}, \qquad X_2 = \pder{}{y} + \frac{x}{2}\pder{}{z}, \qquad X_3 = \pder{}{z} 
$$
образуют левоинвариантный репер на $M$.
 
Пусть $\eps >0$  --- параметр, и
рассмотрим следующее семейство задач оптимального управления
\begin{align}
&\dot q = u_1 X_1 + u_2 X_2 + \eps u_3 X_3, \qquad q \in M, \label{pr31}\\
&u \in U = \{(u_1, u_2, u_3) \in \R^3 \mid u_1^2+u_2^2+u_3^2 \leq 1\},  \label{pr32}\\
&q(0) = q_0 = \Id = (0, 0, 0), \qquad q(t_1) = q_1, \\
&t_1 \to \min.  \label{pr34}
\end{align}
Решения этой задачи --- римановы кратчайшие для римановой задачи на $M$  с ортонормированным репером $\{X_1, X_2, \eps X_3\}$.
При $\eps \to 0$  система \eq{pr31} стремится к системе $\dot q = u_1 X_1 + u_2 X_2$,  поэтому естественно ожидать, что решения задачи \eq{pr31}--\eq{pr34}  стремятся к решениям субримановой задачи на группе Гейзенберга \cite{ABB, umn1}. 

Система \eq{pr31}  глобально управляема. Эта система левоинвариантна, поэтому в задаче \eq{pr31}--\eq{pr34} существуют оптимальные траектории \cite{ABB}.

\section{Экстремали}
Так как задача \eq{pr31}--\eq{pr34}  риманова, анормальных траекторий нет.

Натурально параметризованные экстремали --- траектории гамильтоновой системы с гамильтонианом $H = \frac 12 (h_1^2+h_2^2+\eps^2h_3^2)$, лежащие на поверхности уровня $\{H = \frac 12\}$, где $h_i(\lam) = \langle \lam, X_i\rangle$, $i = 1, 2, 3$, $\lam \in T^*M$:
\begin{align*}
&\dot h_1 = - h_2 h_3, \qquad \dot h_2 = h_1h_3, \qquad \dot h_3 = 0, \\
&\dot q = h_1 X_1 + h_2 X_2 + \eps^2 h_3 X_3, \\
&h_1^2 + h_2^2 + \eps^2 h_3^2 \equiv 1. 
\end{align*}

Если $h_3 = 0$,  то $h_1, h_2 \equiv \const$  и
$$
x = h_1 t, \qquad y = h_2 t, \qquad  z = 0.
$$

Если $h_3 \neq 0$, то
\begin{align}
&h_1 = h_1^0 \cos \tau - h_2^0 \sin \tau, \qquad h_2 = h_2^0 \cos \tau + h_1^0 \sin \tau, \qquad \tau = h_3 t, \nonumber\\
&x = (h_1^0 \sin \tau + h_2^0(\cos \tau - 1))/h_3, \label{xt}\\
&y = (h_2^0 \sin \tau - h_1^0(\cos \tau - 1))/h_3, \label{yt}\\
&z = (1 - \eps^2 h_3^2)(\tau - \sin \tau)/(2h_3^2) + \eps^2 \tau. \label{zt}
\end{align}

Введем параметризацию эллипсоида $\{H = \frac 12\}$:
\be{par}
h_1 = \cos \theta \cos \f, \quad h_2 = \cos \theta \sin \f, \quad \eps h_3 = \sin \theta, \quad \theta \in \left[\frac{\pi}{2}, \frac{\pi}{2}\right], \ \  \f \in R/(2 \pi \Z).
\ee
Экстремальные траектории параметризуют экспоненциальное отображение
$$
\map{\Exp}{(T_{q_0}^*M \cap \{H = 1/2\})\times \R_{+t}}{M}, \qquad
(\theta, \f, t) \mapsto (x, y, z).
$$

\section{Сопряженные точки}
Пусть $\theta \in (-\frac{\pi}{2}, 0)\cup(0, \frac{\pi}{2})$ и  $\f \in \R/(2 \pi \Z)$.
Непосредственное вычисление на основе формул \eq{xt}--\eq{par}  дает якобиан
\begin{align*}
&J = \frac{\partial(x, y, z)}{\partial (\theta, \f, t)} = \frac{\eps^4 \cos \theta}{\sin^4 \theta} f(\theta, \tau), \\
&f(\theta, \tau) = 2(\cos \tau - 1) + \tau \sin \tau \cos^2 \theta.
\end{align*}
Легко видеть, что $2(\cos \tau -1) + \tau \sin \tau <0$  при $\tau \in (0, 2 \pi)$. Поэтому $f(\theta, \tau) < 0$  при $\tau \in (0, 2 \pi)$, $\theta \in [-\frac{\pi}{2}, \frac{\pi}{2}]$.  Более того, $f(\theta, 2 \pi) = 0$.  Доказано следующее 

\begin{proposition}\label{prop:conj}
Пусть $\theta \in (-\frac{\pi}{2}, 0)\cup(0, \frac{\pi}{2})$ и  $\f \in \R/(2 \pi \Z)$. Тогда первое сопряженное время для траектории $\Exp(\theta, \f)$ есть $\frac{2 \pi \eps}{|\sin \theta|}$.  Соответствующая сопряженная точка есть $(x, y, z) = \left(0, 0, \pi \eps^2 \frac{1 + \sin^2 \theta}{\sin^2 \theta}\right)$.
\end{proposition}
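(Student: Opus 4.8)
The plan is to use the standard fact from Riemannian control theory (no abnormal extremals here, as noted before \eqref{pr31}) that conjugate points along an extremal are precisely the critical values of the time-frozen exponential map $\Exp_t = \Exp(\cdot,\cdot,t)$, and that these are detected by the vanishing of the full Jacobian $J = \partial(x,y,z)/\partial(\theta,\f,t)$ already computed. The heavy analytic step --- establishing $f(\theta,\tau)<0$ for $\tau\in(0,2\pi)$ and $f(\theta,2\pi)=0$ --- is done, so what remains is mostly bookkeeping. Note also that $J$ and $f$ are independent of $\f$ (rotational symmetry), so the whole argument is uniform in $\f$.

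First I would rule out conjugate points before the claimed time. For $\theta\in(-\frac\pi2,0)\cup(0,\frac\pi2)$ the prefactor $\eps^4\cos\theta/\sin^4\theta$ is nonzero, hence $J=0$ iff $f(\theta,\tau)=0$. Since $f(\theta,\tau)<0$ on $\tau\in(0,2\pi)$, the columns $\partial q/\partial\theta,\ \partial q/\partial\f,\ \partial q/\partial t$ stay independent there; in particular $\partial q/\partial\theta\wedge\partial q/\partial\f\neq0$, so $\Exp_t$ is an immersion and carries no critical point. Thus the first zero of $J$ along the extremal is at $|\tau|=2\pi$. Converting via $\tau=h_3 t$ and $\eps h_3=\sin\theta$ from \eqref{par}, and using that $f$ is even in $\tau$ (so for $\theta<0$, i.e. $h_3<0$, the first zero is reached at $\tau=-2\pi$), the first conjugate time is $\tconj = 2\pi/|h_3| = 2\pi\eps/|\sin\theta|$.

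Next I would read off the conjugate point by substituting $\tau=2\pi$ into \eqref{xt}--\eqref{zt}. Since $\sin 2\pi=0$ and $\cos 2\pi-1=0$, both $x$ and $y$ vanish. For the last coordinate, $\tau-\sin\tau=2\pi$, so $z=(1-\eps^2h_3^2)\,\pi/h_3^2+2\pi\eps^2=\pi/h_3^2+\pi\eps^2$; substituting $1/h_3^2=\eps^2/\sin^2\theta$ gives $z=\pi\eps^2(1+\sin^2\theta)/\sin^2\theta$, which is the asserted point $\left(0,0,\pi\eps^2\frac{1+\sin^2\theta}{\sin^2\theta}\right)$.

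The one point genuinely requiring care is confirming that $\tau=2\pi$ is a true conjugate time rather than a spurious zero of $J$ caused by the velocity $\partial q/\partial t$ falling into $\mathrm{span}(\partial q/\partial\theta,\partial q/\partial\f)$ while these two remain independent. Equivalently, one must verify that $\partial q/\partial\theta\wedge\partial q/\partial\f$ actually degenerates at $\tau=2\pi$. I would settle this either by invoking the standard theorem that the first positive zero of the Jacobian of the Riemannian exponential map is the first conjugate time (valid in the absence of abnormal extremals, cf. \cite{ABB}), or by directly checking the collinearity of $\partial q/\partial\theta$ and $\partial q/\partial\f$ at $\tau=2\pi$ from \eqref{xt}--\eqref{zt}. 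Everything else is routine substitution.
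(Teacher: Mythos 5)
Your proposal follows essentially the same route as the paper: the paper's proof consists precisely of the Jacobian computation $J = \frac{\eps^4\cos\theta}{\sin^4\theta}\,f(\theta,\tau)$, the observation that $f(\theta,\tau)<0$ for $\tau\in(0,2\pi)$ while $f(\theta,2\pi)=0$, and the identification of conjugate points with critical values of $\Exp$, after which the conjugate time $\frac{2\pi\eps}{|\sin\theta|}$ and the point $\left(0,0,\pi\eps^2\frac{1+\sin^2\theta}{\sin^2\theta}\right)$ are read off by substitution exactly as you do. Your extra care --- using the evenness of $f$ in $\tau$ to handle $\theta<0$, and confirming that the zero of $J$ at $\tau=2\pi$ is a genuine conjugate time rather than a spurious degeneracy --- only makes explicit points that the paper leaves tacit.
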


\section[Диффеоморфное свойство экспоненциального отображения]{Диффеоморфное свойство \\экспоненциального отображения}
Обозначим области в прообразе и образе экспоненциального отображения:
\begin{align*}
&N = \{(\theta, \f, \tau) \mid \theta \in (0, {\pi}/{2}), \ \f \in \R/(2 \pi \Z), \ \tau \in (0, 2 \pi)\}, \\
&D = \{(x, y, z) \in M \mid x^2+y^2 > 0, \ z > 0\}.
\end{align*} 

\begin{proposition}\label{prop:diff}
\begin{itemize}
\item[$(1)$]
$\Exp(N) \subset D$.
\item[$(2)$]
Отображение $\map{\Exp}{N}{D}$  есть диффеоморфизм.
\end{itemize}
\end{proposition}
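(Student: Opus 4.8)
The plan is to treat the two assertions separately, both following from the explicit formulas \eqref{xt}--\eqref{zt} and the parametrization \eqref{par}. For part (1), I would directly inspect the two quantities that cut out $D$. Squaring and adding \eqref{xt} and \eqref{yt} and using $(h_1^0)^2+(h_2^0)^2=\cos^2\theta$ makes the cross terms cancel, giving $x^2+y^2=\frac{2\cos^2\theta}{h_3^2}(1-\cos\tau)=2\eps^2\cot^2\theta\,(1-\cos\tau)$, which is strictly positive for $\theta\in(0,\pi/2)$, $\tau\in(0,2\pi)$. Rewriting \eqref{zt} with $1-\eps^2h_3^2=\cos^2\theta$ and $h_3^2=\sin^2\theta/\eps^2$ gives $z=\frac{\eps^2}{2}\bigl[\cot^2\theta\,(\tau-\sin\tau)+2\tau\bigr]$, and since $\tau-\sin\tau>0$ on $(0,2\pi)$ every summand is positive, so $z>0$. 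Hence $\Exp(N)\subset D$.

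For part (2) I would first record that $\Exp$ is a local diffeomorphism on $N$. By Proposition~\ref{prop:conj} the first conjugate time corresponds to $\tau=2\pi$, so on $N$ (where $\tau<2\pi$ and $\cos\theta>0$) we have $f(\theta,\tau)<0$ and the Jacobian $J$ does not vanish. Passing from the variables $(\theta,\f,t)$ to $(\theta,\f,\tau)$ only multiplies $J$ by $\partial\tau/\partial t=h_3\neq0$, so nonvanishing is preserved. It then remains to prove that $\Exp\colon N\to D$ is a bijection, after which its inverse is automatically smooth and the claim follows.

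To get bijectivity I would exploit the rotational symmetry visible in \eqref{xt}, \eqref{yt}: one checks that $(x,y)$ equals $\eps\cot\theta$ times the rotation through angle $\f$ of the fixed vector $(\sin\tau,\,1-\cos\tau)$. Hence in cylindrical coordinates $(r,\psi,z)$ on $D$ the map reads $r=2\eps\cot\theta\,\sin(\tau/2)$, $\psi=\f+\tau/2$, and $z=\frac{\eps^2}{2}\bigl[\cot^2\theta\,(\tau-\sin\tau)+2\tau\bigr]$, with $r,z$ independent of $\f$. Substituting $c=\cot\theta\in(0,\infty)$, a diffeomorphism of $(0,\pi/2)$, I would for a given target $(r,\psi,z)$ solve $r=2\eps c\sin(\tau/2)$ for $c$ and insert it into the $z$-equation, reducing the whole problem to the single scalar equation $z=G(\tau):=r^2\dfrac{\tau-\sin\tau}{4(1-\cos\tau)}+\eps^2\tau$ on $(0,2\pi)$; once $\tau$ is found, $c>0$ and $\psi$ recover $\theta$ and $\f$ uniquely.

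The heart of the matter, and the step I expect to be the main obstacle, is showing that $G$ is a strictly increasing bijection of $(0,2\pi)$ onto $(0,\infty)$, which forces a unique $\tau$ and hence a unique preimage. Differentiating $g(\tau):=\frac{\tau-\sin\tau}{4(1-\cos\tau)}$ produces a numerator proportional to $-\bigl(2(\cos\tau-1)+\tau\sin\tau\bigr)$, which is \emph{positive} on $(0,2\pi)$ precisely by the inequality $2(\cos\tau-1)+\tau\sin\tau<0$ already established before Proposition~\ref{prop:conj}; thus $g'>0$, and adding the increasing term $\eps^2\tau$ gives $G'>0$. The boundary behaviour $G(0^+)=0$ (from $g(\tau)\sim\tau/12$) and $G(2\pi^-)=+\infty$ (since $1-\cos\tau\to0$) then yields surjectivity onto $(0,\infty)$. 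Combining the bijection with the local-diffeomorphism property finishes the proof; alternatively, one may quotient by the $SO(2)$-symmetry and apply the Hadamard global diffeomorphism theorem to the resulting proper local diffeomorphism between the contractible reduced domains, the symmetry reduction being needed because $D$ itself is not simply connected.
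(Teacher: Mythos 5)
Your part (1) is essentially the paper's own computation: $z>0$ from \eq{zt}, and $x^2+y^2=2\eps^2\ctg^2\theta\,(1-\cos\tau)>0$ from \eq{xt}, \eq{yt}. For part (2) you take a genuinely different, and correct, route. Both proofs start by deducing from Proposition \ref{prop:conj} that $\Exp$ is nondegenerate on $N$; after that the paper argues topologically --- it proves properness of $\map{\Exp}{N}{D}$ by a case analysis of all degenerations $(\theta_n,\f_n,\tau_n)\to\partial N$, concludes from Theorem 13.23 of \cite{ABB} that $\Exp$ is a covering, and then rules out a nontrivial covering by exhibiting a loop ($\theta=\bar\theta$, $\tau=\pi$, $\f\in S^1$) whose image generates the fundamental group of $D\cong S^1\times\R^2$, citing Corollary 13.25 of \cite{ABB}. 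You instead prove bijectivity outright: the rotational symmetry puts $\Exp$ in the cylindrical form $r=2\eps\ctg\theta\sin(\tau/2)$, $\psi=\f+\tau/2$, $z=\frac{\eps^2}{2}\bigl(\ctg^2\theta\,(\tau-\sin\tau)+2\tau\bigr)$, and eliminating $\ctg\theta$ reduces inversion to the scalar equation $z=G(\tau)=r^2\,\tfrac{\tau-\sin\tau}{4(1-\cos\tau)}+\eps^2\tau$ on $(0,2\pi)$; your derivative identity $(1-\cos\tau)^2-(\tau-\sin\tau)\sin\tau=-\bigl(2(\cos\tau-1)+\tau\sin\tau\bigr)$ is correct, so $G'>0$ follows from the very inequality $2(\cos\tau-1)+\tau\sin\tau<0$ used for the Jacobian, and the limits $G(0^+)=0$, $G(2\pi^-)=+\infty$ give a unique solution $\tau$, hence unique $\theta$ and $\f$; a bijective local diffeomorphism is a diffeomorphism. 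Your approach buys elementarity and an essentially explicit inverse --- no properness argument, no covering-space theory --- and it exposes the pleasant fact that global injectivity and the absence of conjugate points on $N$ rest on one and the same inequality. The paper's approach buys robustness: the scheme of nondegenerate plus proper implies covering implies diffeomorphism does not require the equations to be explicitly invertible, and it is the one that transfers to left-invariant problems where no closed-form reduction like yours exists. One caution: your closing alternative (quotient by the $SO(2)$-symmetry and apply Hadamard) is only sketched and would still require a properness proof, which you never supply; your main argument, however, does not depend on it.
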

\begin{proof}
Пусть $(\theta, \f, \tau)\in N$, и $q = (x, y, z) = \Exp(\theta, \f, \tau)$.

(1) Из формулы \eq{zt} следует, что $z > 0$. Непосредственное вычисление на основе формул \eq{xt}, \eq{yt}  дает
$$
x^2 + y^2 = \frac{2}{h_3^2}((h_1^0)^2 + (h_2^0)^2)(1 - \cos \tau) > 0.
$$
Поэтому $ q \in D$.

(2) Многообразия $N$, $D$  гомеоморфны пространству $S^1 \times \R^2$. 

Отображение $\map{\Exp}{N}{D}$  невырождено по предложению \ref{prop:conj}.

Докажем, что это отображение собственное, т.е. для любого компакта $K \subset D$ его прообраз $\Exp^{-1}(K)\subset N$ компактен. Это эквивалентно следующей импликации:
\be{implic}
(\theta_n, \f_n, \tau_n) \to \partial N \then q_n = \Exp(\theta_n, \f_n, \tau_n) \to \partial D.
\ee
Заметим, что последовательность $(\theta, \f, \tau) \to \partial N$  тогда и только тогда, когда существует подпоследовательность, на которой выполняется одно из следующих условий: 1) $\theta \to 0$, 2) $\theta \to \pi/2$, 3) $\tau \to 0$, 4) $\tau \to 2 \pi$, а последовательность $q = (x, y, z) \to \partial D$ тогда и только тогда, когда существует подпоследовательность, на которой выполняется одно из следующих условий: 1) $x^2 + y^2 \to 0$, 2) $x^2 + y^2 \to \infty$, 3) $z \to 0$, 4) $z \to \infty$. 

1) Пусть $\theta \to 0$.

1.1) Пусть $\tau \to \bar \tau \in (0, 2 \pi)$,  тогда $x^2 + y^2 \to 0$ в силу \eq{xt}, \eq{yt}.

1.2)  Пусть $\tau \to 0$.

1.2.1) Пусть $\frac{\tau}{h_3} \to 0$,  тогда $x^2 + y^2 \to 0$ в силу \eq{xt}, \eq{yt}.

1.2.2) Пусть $\frac{\tau}{h_3} \to \infty$,  тогда $x^2 + y^2 \to 0$ в силу \eq{xt}, \eq{yt}.

1.2.3) Пусть $\frac{\tau}{h_3} \to C \in (0, \infty)$,  тогда $z \to 0$ в силу \eq{zt}.

1.3) Пусть $\tau \to 2 \pi$.

1.3.1) Пусть $\frac{\tau - 2 \pi}{h_3} \to 0$, тогда $x^2 + y^2 \to 0$ в силу \eq{xt}, \eq{yt}.

1.3.2) Пусть $\frac{\tau - 2 \pi}{h_3} \to \infty$, тогда $x^2 + y^2 \to 0$ в силу \eq{xt}, \eq{yt}.

1.3.3) Пусть $\frac{\tau - 2 \pi}{h_3} \to C \in (0, \infty)$, тогда $z \to 0$ в силу \eq{zt}.

2) Пусть $\theta \to \frac{\pi}{2}$, тогда $x^2 + y^2 \to 0$ в силу \eq{xt}, \eq{yt}.

3) Пусть $\tau \to 0$, $\theta \to \bar{\theta} \in (0, \frac{\pi}{2})$, тогда $z \to 0$ в силу \eq{zt}.

Доказана импликация \eq{implic}, поэтому отображение $\map{\Exp}{N}{D}$  собственное.

По теореме 13.23 \cite{ABB}  это отображение --- накрытие.

Теперь докажем, что существует такая петля $\gamma \subset N$, что кривая $\Exp(\gamma)$ гомотопна кривой $S^1 \times \{p\}$  для некоторой точки $p \in E$, где $D$ гомеоморфно пространству $S^1 \times E$.
А именно, рассмотрим кривую
$$
\gamma \ : \ \theta = \bar \theta \in (0, {\pi}/{2}), \quad \tau = \pi, \quad \f \in S^1. 
$$ 
Тогда для кривой $\Exp(\gamma) = (x, y, z)$  имеем
\begin{align*}
&x = -2\frac{\cos \bar\theta}{\bar h_3} \sin(\f),\\
&y = 2\frac{\cos \bar\theta}{\bar h_3} \cos(\f),\\
&z = \bar z = \frac{1 + \eps^2 \bar h_3^2}{2 \bar h_3^2} \pi, \qquad \bar h_3 = \frac{\sin \bar \theta}{\eps}.
\end{align*}
Легко видеть, что кривая $\Exp(\gamma)$ гомотопна кривой $S^1 \times \bar p$,  где \\
$
\bar p = \left(0, 2 \frac{\cos \bar\theta}{\bar h_3}, \bar z\right).
$

По следствию 13.25 \cite{ABB}, отображение $\map{\Exp}{N}{D}$ есть диффеоморфизм.
\end{proof}

\section{Оптимальность экстремальных траекторий}

\begin{theorem}\label{th:optim}
Пусть $\theta \in [-\frac{\pi}{2}, \frac{\pi}{2}]$, $\f \in \R/(2 \pi \Z)$, и пусть $q(t) = \Exp(\theta, \f, t)$.
\begin{itemize}
\item[$(1)$]
Если $\theta = 0$, то траектория $q(t)$, $t \in [0, t_1]$,  оптимальна для любого $t_1 > 0$.
\item[$(2)$]
Если $\theta \neq 0$, то траектория $q(t)$, $t \in [0, t_1]$,  оптимальна тогда и только тогда, когда $t_1 \in \left(0, 
\frac{2 \pi\eps}{|\sin\theta|}\right)$.
\end{itemize}
\end{theorem}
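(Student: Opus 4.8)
The plan is to separate three regimes --- the flat case $\theta=0$, the subcritical case $0<t_1<\frac{2\pi\eps}{|\sin\theta|}$, and the critical/supercritical case $t_1\geq\frac{2\pi\eps}{|\sin\theta|}$ --- and to feed each to a different tool: a projection estimate for the first, the diffeomorphism of Proposition~\ref{prop:diff} for the second, and the conjugate time of Proposition~\ref{prop:conj} for the third. Throughout I use that optimal trajectories exist and, there being no abnormals, are the extremals $\Exp(\theta,\f,t)$.

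For part~$(1)$ note that $\theta=0$ forces $h_3=0$, so $q(t)=(t\cos\f,\,t\sin\f,\,0)$. Projecting the system \eq{pr31} to the $(x,y)$-plane gives $\dot x=u_1$, $\dot y=u_2$ with $u_1^2+u_2^2\leq 1$, whence any admissible trajectory reaching a point $(x_1,y_1,z_1)$ needs time $t_1\geq\sqrt{x_1^2+y_1^2}$; equality forces $u_3\equiv0$ and $(u_1,u_2)$ constant in the direction of $(x_1,y_1)$, and such a control keeps $z\equiv0$. Hence for every $t_1>0$ the segment $q(t)$ is the unique minimizer to its endpoint $(t_1\cos\f,t_1\sin\f,0)$, which is assertion~$(1)$.

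For the sufficiency in part~$(2)$, the metric-preserving reflection $(x,y,z)\mapsto(y,x,-z)$ fixes $q_0$ and interchanges the extremals with parameters $\theta$ and $-\theta$, so I may assume $\theta\in(0,\pi/2)$; set $\tau_1=h_3t_1\in(0,2\pi)$, so that $q(t_1)=\Exp(\theta,\f,\tau_1)\in D$ by Proposition~\ref{prop:diff}$(1)$. Let an optimal trajectory to $q(t_1)$ be $\Exp(\theta^*,\f^*,\tau^*)$, of length $t^*\leq t_1$. Since it is optimal it has no conjugate point in its interior, so $\tau^*\leq 2\pi$ when $\theta^*\in(0,\pi/2)$ by Proposition~\ref{prop:conj}. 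Running through the boundary of $N$ excludes every remaining competitor: $\theta^*=0$ gives $z=0$, the values $\theta^*=\pm\pi/2$ and $\tau^*=2\pi$ give $x=y=0$, and $\theta^*\in(-\pi/2,0)$ gives $z<0$ by \eq{zt}, all incompatible with $q(t_1)\in D$. Thus $(\theta^*,\f^*,\tau^*)\in N$, and injectivity of $\Exp|_N$ from Proposition~\ref{prop:diff}$(2)$ forces $(\theta^*,\f^*,\tau^*)=(\theta,\f,\tau_1)$ and $t^*=t_1$; so $q|_{[0,t_1]}$ is the unique minimizer.

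For the necessity, if $t_1>\frac{2\pi\eps}{|\sin\theta|}$ the interval contains the first conjugate point of Proposition~\ref{prop:conj} in its interior, so $q|_{[0,t_1]}$ is not even locally minimizing. If $t_1=\frac{2\pi\eps}{|\sin\theta|}$ the endpoint is the conjugate point $\left(0,0,\pi\eps^2\frac{1+\sin^2\theta}{\sin^2\theta}\right)$ on the $z$-axis; since its $x,y$ coordinates vanish independently of $\f$, the whole circle $\{\Exp(\theta,\psi,2\pi):\psi\in\R/(2\pi\Z)\}$ ends there with the same length and $q|_{[0,t_1]}$ passes through its first conjugate point, so it is no longer the unique minimizer and does not minimize on any larger interval. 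I expect the only real obstacle to be the sufficiency step: the diffeomorphism of Proposition~\ref{prop:diff} controls competitors inside $N$, but one must independently rule out minimizers exiting through $\partial N$, which is exactly what the existence of an optimizer together with the conjugate bound $\tau^*\leq 2\pi$ achieves before injectivity is invoked.
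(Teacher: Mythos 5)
Most of your argument is sound, and in places it takes a genuinely different route from the paper. For part $(1)$ you give a direct calibration argument (projection onto the $(x,y)$-plane), whereas the paper argues that no extremal with $\theta \neq 0$ reaches a point with $z=0$ and invokes existence of minimizers; both work, and yours is more self-contained. For the sufficiency in part $(2)$ your scheme is the paper's (the diffeomorphism of Proposition \ref{prop:diff} plus existence), but you execute it more carefully: you explicitly eliminate competitors with $\tau^* \geq 2\pi$ (no interior conjugate points along a minimizer) and competitors through $\partial N$, steps the paper leaves implicit. For $t_1 > \frac{2\pi\eps}{|\sin\theta|}$ you use the Jacobi argument (an interior conjugate point destroys even local optimality), where the paper uses the Maxwell-point argument (the whole circle of extremals meets again on the $z$-axis); both are standard and valid.

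The genuine gap is the borderline case $t_1 = \frac{2\pi\eps}{|\sin\theta|}$, which the stated ``only if'' (open interval) requires to be non-optimal. Your argument establishes only that $q|_{[0,t_1]}$ is not the \emph{unique} minimizer; moreover ``passes through its first conjugate point'' is incorrect --- the trajectory \emph{ends} at it, and a geodesic ending at its first conjugate point may perfectly well minimize. Neither fact implies non-optimality, and in fact non-optimality is false here: among all extremals reaching $\left(0,0,\pi\eps^2\frac{1+\sin^2\theta}{\sin^2\theta}\right)$ the minimal time is exactly $\frac{2\pi\eps}{|\sin\theta|}$ --- the vertical extremal $\Exp(\pm\frac{\pi}{2},\cdot,\cdot)$ arrives later by $\pi\eps\frac{(1-|\sin\theta|)^2}{\sin^2\theta}$, and extremals making several turns are longer still --- so by existence of optimal trajectories the entire circle $\Exp(\theta,\psi,\cdot)|_{[0,t_1]}$, $\psi \in \R/(2\pi\Z)$, consists of minimizers. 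This is not a defect you could have repaired: the theorem's open interval contradicts the paper's own proof, whose final paragraph asserts optimality if and only if $t_1 \leq \frac{2\pi\eps}{|\sin\theta|}$, i.e.\ the closed interval, which is the correct statement (consistent with the general fact that Riemannian geodesics minimize up to and including the cut time). The honest conclusion is that part $(2)$ holds with $\bigl(0, \frac{2\pi\eps}{|\sin\theta|}\bigr]$ in place of the open interval; your write-up instead papers over the endpoint with an invalid inference from non-uniqueness to non-optimality.
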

\begin{proof}
(1)  Если $\theta = 0$, то соответствующая траектория есть прямая $q(t) = (t \cos \f, t \sin \f, 0)$. Возьмем любое $t_1 > 0$  и обозначим $q_1 = q(t_1)$. Никакая траектория $\Exp(\theta, \f, t)$  с  $\theta \neq 0$  не приходит в точку $q_1$.  Поэтому $q(t)$  есть единственная натурально параметризованная экстремальная траектория, приходящая в $q_1$. В силу существования оптимальных траекторий, $q(t)$, $t \in [0, t_1]$,  оптимальна. 

(2) Пусть $\theta \notin \{0, \pm \frac{\pi}{2}\}$. Тогда любая траектория $q(t) = \Exp(\theta, \f, t)$, $\f \in \R/(2 \pi \Z)$,  приходит в точку $\bar q = \Exp\left(\theta, \f, \frac{2 \pi \eps}{|\sin \theta|}\right) = \left(0, 0, \frac{1 + \eps^2 h_3^2}{2 h_3^2} 2 \pi\right)$, $h_3 = \frac{ \sin \theta}{\eps}$,  поэтому точка $\bar q$ есть точка Максвелла \cite{umn1}. Поэтому траектория $q(t)$, $t \in [0, t_1]$,  неоптимальна  при $t_1 > \frac{2 \pi\eps}{|\sin \theta|}$.

Пусть $ q_1 = (x_1, y_1, z_1) \in M$, $x_1^2+y_1^2 > 0$, $z_1 > 0$. Если $\theta \in \{0, \pm\frac{\pi}{2}\}$,  то траектория $\Exp(\theta, \f, t)$ не приходит в точку $q_1$. Так как отображение $\map{\Exp}{N}{D}$  есть диффеоморфизм (предложение \ref{prop:diff}),  точка $q_1$  достигается в точности одной экстремальной траекторией вида $\Exp(\theta, \f, t)$, $\theta \in (0, \frac{\pi}{2})$, $\f \in \R/(2 \pi \Z)$. В силу существования оптимальных траекторий, эта траектория оптимальна. Более того, траектория $q(t)$, $t \in [0, t_1]$, оптимальна при $t_1 \in \left(0, \frac{2 \pi \eps}{|\sin \theta|}\right)$. 

Пусть $q_1 = (0, 0, z_1) \in M$, $z_1 \in (0, 2 \pi \eps^2]$. Никакая траектория $q=\Exp(\theta, \f, t)$, $\theta \in (0, \frac{\pi}{2})$, не приходит в точку $q_1$  при $t = \frac{2 \pi}{\sin \theta}$  т.к. формула~\eq{zt} при $t = \frac{2 \pi}{\sin \theta}$ дает
$$
z = \frac{1}{2 h_3^2}(1 + \eps^2 h_3^2)\tau = \pi \eps^2\left( \frac{1}{\sin^2 \theta} + 1\right) > 2 \pi \eps^2.
$$ 
Поэтому траектория $\Exp(\frac{\pi}{2}, \f, t)$ оптимальна при $t \in [0, t_1]$, $t_1 \in \left(0, \frac{2\pi}{\eps}\right)$. В силу симметрии, траектория $\Exp(-\frac{\pi}{2}, \f, t)$ также оптимальна при $t \in [0, t_1]$, $t_1 \in \left(0, \frac{2\pi}{\eps}\right)$.

Наконец, пусть $q_1 = (0, 0, z_1) \in M$, $z_1 > 2 \pi\eps^2$. Точка $q_1$  достигается траекторией $\Exp(\frac{\pi}{2}, \f, t)$ при $t = t_1 = \frac{z_1}{\eps}$, и траекториями $\Exp(\theta,\f, t)$, $\theta = \arcsin\frac{1}{\sqrt{\frac{z_1}{\pi\eps^2}-1}} \in (0, \frac{\pi}{2})$  при $t = t_2 = 2 \pi \eps  \sqrt{\frac{z_1}{\pi\eps^2}-1}$. Но $t_2 < t_1$  при $z_1 > 2 \pi \eps^2$. Поэтому траектория $\Exp(\frac{\pi}{2}, \f, t)$, $ t\in [0, t_1]$, оптимальна тогда и только тогда, когда $t_1 \leq \frac{2\pi}{\eps}$, а траектория $\Exp(\theta, \f, t)$, $\theta \in (0, \frac{\pi}{2})$, $t \in [0, t_1]$,  оптимальна тогда и только тогда, когда $t_1 \leq \frac{2\pi}{\eps \sin \theta}$. В силу симметрии, траектория $\Exp(-\frac{\pi}{2}, \f, t)$, $ t\in [0, t_1]$, оптимальна тогда и только тогда, когда $t_1 \leq \frac{2\pi}{\eps}$, а траектория $\Exp(\theta, \f, t)$, $\theta \in (-\frac{\pi}{2}, 0)$, $t \in [0, t_1]$,  оптимальна тогда и только тогда, когда $t_1 \leq \frac{2\pi}{\eps |\sin \theta|}$.
\end{proof}

\section{Первая каустика и множество разреза}
Обозначим первое сопряженное время 
$$
\tconj(\theta, \f) = \inf \{ t_1 > 0 \mid \Exp(\theta, \f, t_1) \text{ --- сопряженная точка}\}
$$
 и время разреза
$$
\tcut(\theta, \f) = \sup \{ t_1 > 0 \mid \Exp(\theta, \f, t), \ t \in [0, t_1] \text{ оптимальна}\}
$$
вдоль экстремальной траектории $\Exp(\theta, \f, t)$, а также первую каустику
$$
\Conj = \{\Exp(\theta, \f, \tconj(\theta, \f)) \mid (\theta, \f) \in T_{q_0}^*M \cap \{H = 1/2\}\}
$$
 и множество разреза
$$
\Cut = \{\Exp(\theta, \f, \tcut(\theta, \f)) \mid (\theta, \f) \in T_{q_0}^*M \cap \{H = 1/2\}\}.
$$

\begin{theorem}\label{th:conjcut}
\begin{itemize}
\item[$(1)$]
Для всех $\theta \in [-\frac{\pi}{2}, \frac{\pi}{2}]$, $\f \in \R/(2 \pi \Z)$
$$
\tconj(\theta, \f) = \tcut(\theta, \f) = \frac{2 \pi \eps}{|\sin \theta|}.
$$
\item[$(2)$]
$\Conj = \Cut = \{(0, 0, z) \in M \mid |z| \geq 2 \pi \eps^2\}$.
\end{itemize}
\end{theorem}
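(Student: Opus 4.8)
The plan is to obtain the two equalities of part~(1) directly from the results already proved---Proposition~\ref{prop:conj} for $\tconj$ and Theorem~\ref{th:optim} for $\tcut$---and then to deduce part~(2) by tracking where their common value sends the extremal. For the regular directions $\theta\in(-\pi/2,0)\cup(0,\pi/2)$, Proposition~\ref{prop:conj} gives $\tconj(\theta,\f)=\frac{2\pi\eps}{|\sin\theta|}$, while Theorem~\ref{th:optim}(2) says that $\Exp(\theta,\f,\cdot)$ is optimal exactly on $\left(0,\frac{2\pi\eps}{|\sin\theta|}\right)$, so $\tcut(\theta,\f)=\frac{2\pi\eps}{|\sin\theta|}$ as well. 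For $\theta=0$ the extremal is the line $(t\cos\f,t\sin\f,0)$, optimal for every $t_1>0$ by Theorem~\ref{th:optim}(1); a globally minimizing geodesic carries no conjugate point, so $\tconj(0,\f)=\tcut(0,\f)=+\infty$, matching $\frac{2\pi\eps}{|\sin\theta|}$ at $\theta=0$ under the convention $1/0=+\infty$. These directions contribute nothing to $\Conj$ or $\Cut$.

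The only delicate directions are the poles $\theta=\pm\pi/2$: there the whole circle $\f$ collapses onto the $z$-axis, the Jacobian $J$ vanishes identically, and the chart~\eq{par} is singular, so Proposition~\ref{prop:conj} does not apply and the first conjugate time must be read intrinsically (via Jacobi fields along the vertical geodesic $(0,0,\eps t)$) rather than through~\eq{par}. Theorem~\ref{th:optim} gives $\tcut(\pm\pi/2,\f)=2\pi\eps$, and since a geodesic is not optimal past its first conjugate time we have $\tcut\leq\tconj$, hence $\tconj(\pm\pi/2,\f)\geq 2\pi\eps$. For the reverse inequality I would note that, as $\theta\to(\pi/2)^-$, the extremals $\Exp(\theta,\f,\cdot)$ converge uniformly on compact time intervals to the vertical geodesic, and invoke lower semicontinuity of the first conjugate time in the initial covector: this gives $\tconj(\pi/2,\f)\leq\liminf_{\theta\to(\pi/2)^-}\frac{2\pi\eps}{\sin\theta}=2\pi\eps$, and the pole $\theta=-\pi/2$ is symmetric. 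Thus $\tconj=\tcut=\frac{2\pi\eps}{|\sin\theta|}$ for every $\theta$, which is part~(1).

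For part~(2) I would evaluate~\eq{zt} at the conjugate time, where $\tau=2\pi\,\sgn\theta$: the conjugate point then lies on the $z$-axis, independently of $\f$, at height $z(\theta)=\sgn(\theta)\,\pi\eps^2\,\frac{1+\sin^2\theta}{\sin^2\theta}$, so $|z(\theta)|=\pi\eps^2\bigl(1+\tfrac{1}{\sin^2\theta}\bigr)$ decreases continuously and strictly from $+\infty$ (as $\theta\to 0^+$) to $2\pi\eps^2$ (at $\theta=\pi/2$, agreeing with $\eps\cdot 2\pi\eps$ for the vertical geodesic). Hence $\theta\in(0,\pi/2]$ sweeps out the heights $[2\pi\eps^2,+\infty)$ and $\theta\in[-\pi/2,0)$ sweeps out $(-\infty,-2\pi\eps^2]$, giving $\Conj=\{(0,0,z)\mid |z|\geq 2\pi\eps^2\}$. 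Since part~(1) gives $\tconj(\theta,\f)=\tcut(\theta,\f)$ for every $(\theta,\f)$, the defining expressions of $\Cut$ and $\Conj$ coincide term by term, and therefore $\Cut=\Conj$.

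I expect the sole genuine obstacle to be the poles $\theta=\pm\pi/2$ in part~(1): because the chart~\eq{par} and the Jacobian degenerate along the vertical axis, the conjugate time cannot be extracted from Proposition~\ref{prop:conj} and must be obtained either by the semicontinuity/limiting argument above or by a direct Jacobi-field computation in a regular chart near the pole. Everything else is bookkeeping built on the already-established Propositions and Theorem.
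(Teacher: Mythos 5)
Your proposal is correct and follows the same architecture as the paper's own proof: part (1) is obtained by combining Proposition~\ref{prop:conj} (for $\tconj$) with Theorem~\ref{th:optim}(2) (for $\tcut$), and part (2) by evaluating $\Exp$ at the common value of the two times. The difference is one of completeness, and it is in your favor. The paper's proof of (1) consists of two sentences and simply cites Proposition~\ref{prop:conj}; but that proposition is stated only for $\theta\in(-\frac{\pi}{2},0)\cup(0,\frac{\pi}{2})$, while the theorem asserts the formula for all $\theta\in[-\frac{\pi}{2},\frac{\pi}{2}]$. At the poles $\theta=\pm\frac{\pi}{2}$ the chart \eq{par} is singular and the Jacobian $J$ vanishes identically (as you observe), so the cited proposition gives no information about $\tconj(\pm\frac{\pi}{2},\f)$, and the paper leaves this case unaddressed. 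Your two-sided argument fills exactly this gap: $\tconj(\pm\frac{\pi}{2},\f)\geq\tcut(\pm\frac{\pi}{2},\f)=2\pi\eps$ because a geodesic does not minimize past its first conjugate point, and $\tconj(\pm\frac{\pi}{2},\f)\leq 2\pi\eps$ by lower semicontinuity of the first conjugate time in the initial covector (equivalently, by closedness of the critical set of $\Exp$, passing to the limit in the critical points $(\theta,\f,2\pi\eps/\sin\theta)$ as $\theta\to\pi/2$); this is sound provided you cite the standard continuity result for Riemannian conjugate times or carry out the Jacobi-field computation you sketch. Similarly, your explicit check that the conjugate heights $|z(\theta)|=\pi\eps^2\left(1+1/\sin^2\theta\right)$ sweep $[2\pi\eps^2,+\infty)$ monotonically, together with the term-by-term identification of the defining expressions of $\Conj$ and $\Cut$, makes precise what the paper compresses into the single phrase that (2) follows from (1).
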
\
\begin{proof}
(1) Равенство для $\tconj(\theta, \f)$ доказано в предложении \ref{prop:conj}. Равенство для $\tcut(\theta, \f)$ следует из п. (2) теоремы  \ref{th:optim}.

(2) следует из п. (1).
\end{proof}

\section{Гладкость расстояния и сфер}
Обозначим через $d(q)$ риманово расстояние между точками $q_0=\Id$  и $q \in M$,
и пусть
$$
S(r) = \{q \in M \mid d(q) = r\}, \qquad r > 0,
$$
 есть риманова сфера радиуса $r$  с центром в $q_0$.

\begin{proposition}
\item[$(1)$]
Расстояние $d$  является гладким в точке $q \in M$  тогда и только тогда, когда $q \notin \Cut \cup \{q_0\}$.
\item[$(2)$]
Если $r \in (0, 2 \pi \eps^2)$, то сфера $S(r)$ гладкая. 
Если $r =2 \pi \eps^2$, то сфера $S(r)$ гладкая в точках, где $x^2 + y^2 \neq 0$. 
Если $r >2 \pi \eps^2$, то сфера $S(r)$ гладкая в точках, где $x^2 + y^2 \neq 0$, и негладкая в точках, где $x^2 + y^2 = 0$.
\end{proposition}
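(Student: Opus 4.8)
The plan is to reduce both parts to the classical fact that, for a complete Riemannian metric, the squared distance $d^2$ is smooth exactly on the complement of the cut locus; the metric here is left-invariant on a Lie group, hence complete, and $\Cut$ is already known from Theorem \ref{th:conjcut}. Part $(1)$ then gives smoothness of $d$ on $M\setminus(\Cut\cup\{q_0\})$ away from $q_0$, where $d=\sqrt{d^2}>0$. Off the axis this is also visible from Proposition \ref{prop:diff}: on $D$, on its mirror image $\{z<0,\ x^2+y^2>0\}$, and on the punctured plane $\{z=0\}$ (the lines $\theta=0$), $\Exp$ has a smooth inverse, through which the arrival time $t=d$ is a smooth function of $q$; the remaining axis segment $0<|z|<2\pi\eps^2$ consists of non-cut points, where the chart argument fails because $\Exp$ is $\f$-degenerate, so one falls back on the general fact to conclude smoothness of $d^2$ and hence of $d$.

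For the converse I would exhibit the failure of smoothness geometrically. At $q_0$ the function $d$ behaves like the Riemannian norm and is not even differentiable. At an axis point $(0,0,z)$ with $|z|>2\pi\eps^2$ the entire circle of spirals $\Exp(\theta,\f,\cdot)$, $\f\in\R/(2\pi\Z)$, is minimizing by Theorem \ref{th:optim}, and these arrive with pairwise distinct final velocities (their nonzero horizontal part is $(\cos\theta\cos\f,\cos\theta\sin\f)$), so no single covector can be the differential and $d$ is not differentiable. At the endpoints $|z|=2\pi\eps^2$ the minimizer is the unique vertical geodesic, but the point is its first conjugate point by Proposition \ref{prop:conj}, where $\Exp$ degenerates and $d^2$ fails to be $C^2$.

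For part $(2)$, off the $z$-axis no point lies in $\Cut$, so by part $(1)$ the distance is smooth there with $|\nabla d|_g\equiv1\neq0$; hence $S(r)=d^{-1}(r)$ is a smooth hypersurface at every point with $x^2+y^2\neq0$, for every $r>0$. Since $q_0\notin S(r)$, the only possible singularities lie on the axis, where $S(r)$ meets it at the poles $(0,0,\pm z)$ with $d(0,0,z)=r$. Along the axis the minimizer is the vertical geodesic for $0<z\le2\pi\eps^2$, giving $d(0,0,z)=z/\eps$, and the spiral with $\sin^2\theta=\pi\eps^2/(z-\pi\eps^2)$ for $z\ge2\pi\eps^2$, giving
\[
d(0,0,z)=\frac{2\pi\eps}{\sin\theta}=2\sqrt{\pi\,(z-\pi\eps^2)} .
\]
Thus $z\mapsto d(0,0,z)$ is a continuous increasing bijection of $(0,\infty)$ onto itself, and a pole lies in $\Cut$ precisely when $r\ge r_*:=d(0,0,2\pi\eps^2)=2\pi\eps$. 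This yields the three regimes: for $r<r_*$ the pole is below the cut locus and $S(r)$ is smooth everywhere; for $r=r_*$ the poles $(0,0,\pm2\pi\eps^2)$ are the only points of $S(r)\cap\Cut$, so $S(r)$ is smooth wherever $x^2+y^2\neq0$; and for $r>r_*$ the poles lie strictly inside $\Cut$.

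The main obstacle is the behaviour exactly at the poles. For $r>r_*$ one must upgrade non-differentiability of $d$ to genuine non-smoothness of the surface: I would use rotational symmetry to write $S(r)$ as a surface of revolution with profile $z=z(\rho)$, $\rho=\sqrt{x^2+y^2}$, and show this profile meets the axis at a nonzero angle, so that revolving it produces a true conical point. The genuinely delicate case is $r=r_*$, where the pole is a first conjugate point reached by a single minimizer: $d$ is differentiable but $\Exp$ is degenerate, so the regularity of $d$ alone does not settle smoothness of $S(r_*)$ at the pole. Deciding it requires expanding the profile $z(\rho)$ to leading order as $\rho\to0$ and reading off its order of contact with the axis, which is exactly why the proposition asserts only off-axis smoothness in that borderline case.
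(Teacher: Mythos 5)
Your overall strategy coincides with the paper's: both parts are reduced to the standard regularity criteria for a complete Riemannian metric (smoothness of $d$ at $q \neq q_0$ exactly when a unique, non-conjugate minimizer arrives at $q$; smoothness of the sphere at such points), combined with the description of $\Cut$ from Theorem \ref{th:conjcut} and of the minimizers from Theorem \ref{th:optim}. The paper's entire proof consists of citing these two criteria from \cite{ABB}; your additions --- the regular-value argument ($|\nabla d|_g \equiv 1$ off the axis), the explicit axial profile $d(0,0,z)$, and the arrival-velocity argument for non-differentiability of $d$ at points with several minimizers --- are correct and consistent with that.

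There is, however, one concrete gap. For $r$ above the critical radius, the paper deduces non-smoothness of $S(r)$ at the poles from the second half of the cited criterion: if more than one minimizer arrives at $q$, then the sphere is non-smooth at $q$ \cite{ABB}. You correctly observe that non-differentiability of $d$ alone does not rule out that the level set is nevertheless a smooth surface, but the replacement you propose --- writing $S(r)$ as a surface of revolution and showing its profile meets the axis at a nonzero angle --- is only announced (``I would use\dots'', ``one must upgrade\dots''), not carried out. As written, the non-smoothness claim is unproven in your text; either invoke the criterion from \cite{ABB} as the paper does, or finish the conical-point argument, e.g.: where $S(r)$ is smooth its tangent plane is orthogonal to the arrival velocity of the unique minimizer (Gauss lemma), so along the meridian with parameter $\f$ the tangent planes converge, as the pole is approached, to the plane orthogonal to the velocity with horizontal part $(\cos\theta_0\cos\f,\ \cos\theta_0\sin\f) \neq 0$; these limits depend on $\f$, so no single tangent plane can exist at the pole. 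A separate point in your favour: your threshold $r_* = 2\pi\eps = d\bigl(q_0,(0,0,2\pi\eps^2)\bigr)$ is the correct critical radius, consistent with the paper's Remark and with the injectivity radius $r_i = 2\pi\eps$, whereas the proposition as printed writes $2\pi\eps^2$ --- evidently a slip, since $2\pi\eps^2$ is the $z$-coordinate at which $\Cut$ begins, not the distance to that point; your analysis makes this distinction explicit, while the paper's proof never addresses it.
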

\begin{proof}
(1) Расстояние $d$ является гладким в точке $q \in M \setminus \{q_0\}$ тогда и только тогда, когда существует единственная кратчайшая, соединяющая $q_0$  и $q$, и точка $q$  не является сопряженной \cite{ABB}.

(2) Если в точку $q \in M \setminus \{q_0\}$  приходит единственная кратчайшая и $q$  не является сопряженной точкой,  то сфера гладкая в точке $q$. Если в точку~$q$  приходит более одной кратчайшей, то сфера негладкая в точке $q$ \cite{ABB}.
\end{proof}

\begin{remark}
Гладкость сферы $S(r)$, $r = 2 \pi \eps$, в точках прямой $\{x^2 + y^2 = 0\}$  остается под вопросом. В окрестности таких точек эта сфера параметризуется переменными $h_1^0$, $h_2^0$ (причем в таких точках $h_1^0 = h_2^0 = 0$)  соотношениями вида $(h_1^0, h_2^0)  \mapsto (x, y, z)$. Непосредственное вычисление показывает, что
$$
\frac{\partial(x, y)}{\partial(h_1^0, h_2^0)} = 
\frac{\partial(x, z)}{\partial(h_1^0, h_2^0)} =
\frac{\partial(y, z)}{\partial(h_1^0, h_2^0)} = 0 \text{ при }  h_1^0 = h_2^0 = 0.
$$
\end{remark}

\section{Радиус инъективности}
Радиус инъективности риманова многообразия $M$  можно определить следующим образом \cite{burago}:
\begin{multline*}
r_i := \sup \{t_1 > 0 \mid \Exp(\theta, \f, t), \ t \in [0, t_1], \text{ --- кратчайшая}, \\ \theta \in [-{\pi}/{2}, {\pi}/{2}], \ \f \in \R/(2 \pi \Z)\}. 
\end{multline*}

Из теоремы \ref{th:conjcut}  получаем
\begin{corollary}
$r_i = 2 \pi \eps$.
\end{corollary}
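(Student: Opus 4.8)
The plan is to reduce the computation of $r_i$ to the infimum of the cut time over all initial directions, and then to minimise the explicit expression for $\tcut$ furnished by Theorem \ref{th:conjcut}.

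First I would rewrite the defining condition. By definition, $r_i$ is the supremum of those $t_1 > 0$ for which the trajectory $\Exp(\theta, \f, t)$ is a shortest path on $[0, t_1]$ \emph{simultaneously for all} $(\theta, \f)$ with $\theta \in [-\frac{\pi}{2}, \frac{\pi}{2}]$ and $\f \in \R/(2\pi\Z)$. For a fixed direction $(\theta, \f)$ the set of $t_1$ for which the trajectory is optimal on $[0, t_1]$ is an interval with right endpoint $\tcut(\theta, \f)$. Taking the intersection of these intervals over all directions yields an interval whose right endpoint is $\inf_{(\theta, \f)} \tcut(\theta, \f)$, so that
$$
r_i = \inf_{\theta \in [-\frac{\pi}{2}, \frac{\pi}{2}], \ \f \in \R/(2\pi\Z)} \tcut(\theta, \f).
$$
This identity is insensitive to whether each endpoint $\tcut(\theta, \f)$ itself is attained as an optimal time, which is why it can be stated without revisiting the boundary behaviour analysed in Theorem \ref{th:optim}.

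Next I would substitute the value of the cut time. By Theorem \ref{th:conjcut}$(1)$ we have $\tcut(\theta, \f) = \frac{2\pi\eps}{|\sin\theta|}$, independent of $\f$. The map $\theta \mapsto \frac{2\pi\eps}{|\sin\theta|}$ decreases as $|\sin\theta|$ grows, so its infimum over $\theta \in [-\frac{\pi}{2}, \frac{\pi}{2}]$ is attained where $|\sin\theta|$ is maximal. Since $\max_{\theta \in [-\frac{\pi}{2}, \frac{\pi}{2}]} |\sin\theta| = 1$, attained at $\theta = \pm\frac{\pi}{2}$, I obtain $r_i = \frac{2\pi\eps}{1} = 2\pi\eps$.

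There is no genuine obstacle: the entire content has been absorbed into the earlier determination of the cut locus. The single point deserving a word of care is the degenerate direction $\theta = 0$, where $|\sin\theta| = 0$, the trajectory is the straight line, and $\tcut = +\infty$ (consistent with Theorem \ref{th:optim}$(1)$). This direction therefore does not lower the infimum, and the infimum is genuinely achieved at the vertical covectors $\theta = \pm\frac{\pi}{2}$, confirming $r_i = 2\pi\eps$.
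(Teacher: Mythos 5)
Your proof is correct and follows the same route as the paper: the paper derives the corollary directly from Theorem \ref{th:conjcut}, i.e.\ $r_i = \inf_{(\theta,\f)} \tcut(\theta,\f) = \inf_\theta \frac{2\pi\eps}{|\sin\theta|} = 2\pi\eps$, which is exactly what you spell out. Your explicit handling of the degenerate direction $\theta = 0$ (where $\tcut = +\infty$) and of the irrelevance of whether the cut time is itself attained are sensible elaborations of the same one-line argument.
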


\section{Сходимость при $\eps \to 0$}
В этом разделе будем обозначать риманову задачу \eq{pr31}--\eq{pr34} как $P_{\eps}$, $\eps > 0$, и рассмотрим субриманову задачу $P_0$, которая ставится следующим образом \cite{ABB, umn1}:
\begin{align}
&\dot q = u_1 X_1 + u_2 X_2, \qquad q \in M, \quad u_1^2 + u_2^2 \leq 1, \\
&q(0) = q_0, \quad q(t_1) = q_1, \\
&t_1 \to \min. 
\end{align} 

\subsection{Сходимость экспоненциального отображения}
Натурально параметризованные нормальные экстремальные траектории задачи $P_0$  задаются следующим образом:
\begin{align}
&x = (\sin(\psi + c t) - \sin \psi)/c, \label{xt4}\\
&y = (\cos \psi - \cos(\psi + c t))/c, \label{yt4}\\
&z = (ct-\sin ct)/(2 c^2) \label{zt4}
\end{align}
при $c \neq 0$  и 
\be{xyzt4}
x = t \cos \psi, \qquad y = t \sin \psi, \qquad z =0
\ee
при $c = 0$,  где $\psi \in \R/(2 \pi \Z)$, $c \in \R$. Формулы \eq{xt4}--\eq{xyzt4}  параметризуют экспоненциальное отображение для задачи $P_0$:
$$
\map{\Exp_0}{(T_{q_0}^* M \cap \{h_1^2+h_2^2 = 1\})\times \R_{+t}}{M}, \qquad (c,\psi,  t) \mapsto (x, y, z),
$$
где $h_1 = \cos \psi$, $h_2 = \sin \psi$, $h_3 = c$.

Для сравнения экспоненциальных отображений $\Exp_{\eps}$ для задачи $P_{\eps}$, $\eps > 0$,  и $\Exp_0$ для задачи $P_0$ определим следующие функции при $h_1^2 + h_2^2 \neq 0$:
\be{change}
(\theta, \f) \mapsto (c, \psi), \qquad c = \frac{\sin \theta}{\eps}, \quad \psi = \f.
\ee

\begin{proposition}\label{prop:Expe}
Пусть $\theta \in (-\frac{\pi}{2}, \frac{\pi}{2})$, $\f \in \R/(2 \pi \Z)$.
Тогда имеет место равенство
$$
\lim_{\eps \to 0} \Exp_{\eps}(\theta, \f, t) = \restr{\Exp_0(c, \psi, t)}{c = c(\theta), \ \psi = \psi(\f)}.
$$
\end{proposition}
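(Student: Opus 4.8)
The plan is to reduce the statement to termwise limits of the explicit closed-form expressions already at hand. First I would use the change of variables \eq{change} to re-express the Riemannian exponential map $\Exp_{\eps}$ in the sub-Riemannian covector coordinates $(c, \psi)$: I set $h_3 = c$ and regard the angle as determined by $\sin \theta = \eps c$ (so that $\theta \in (-\frac{\pi}{2}, \frac{\pi}{2})$ and $\theta \to 0$ as $\eps \to 0$), while $\f = \psi$ is held fixed. The decisive observation is that, under this substitution, the time argument $\tau = h_3 t = c t$ becomes independent of $\eps$, the denominator $h_3 = c$ stays fixed, and $\cos \theta = \sqrt{1 - \eps^2 c^2} \to 1$, so that the initial momenta $h_1^0 = \cos \theta \cos \psi \to \cos \psi$ and $h_2^0 = \cos \theta \sin \psi \to \sin \psi$.

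Assume first $c \neq 0$. Then I would pass to the limit $\eps \to 0$ term by term in \eq{xt}, \eq{yt}, \eq{zt}. For the first two coordinates the denominator $h_3 = c$ is constant and the numerators converge, which yields
$$
\lim_{\eps \to 0} x = \frac{\cos \psi \sin(ct) + \sin \psi (\cos(ct) - 1)}{c}, \qquad \lim_{\eps \to 0} y = \frac{\sin \psi \sin(ct) - \cos \psi (\cos(ct) - 1)}{c}.
$$
Applying the addition formulas $\sin(\psi + ct) = \sin \psi \cos(ct) + \cos \psi \sin(ct)$ and $\cos(\psi + ct) = \cos \psi \cos(ct) - \sin \psi \sin(ct)$ identifies these limits with the right-hand sides of \eq{xt4} and \eq{yt4}. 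For the third coordinate I would use that $\eps^2 h_3^2 = \eps^2 c^2 = \sin^2 \theta \to 0$, so the prefactor $(1 - \eps^2 h_3^2) \to 1$, while $\tau - \sin \tau = ct - \sin(ct)$ and $2 h_3^2 = 2 c^2$ are $\eps$-independent and the additive term $\eps^2 \tau = \eps^2 c t \to 0$; hence $\lim_{\eps \to 0} z = (ct - \sin(ct))/(2 c^2)$, which is exactly \eq{zt4}.

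Finally I would dispose of the degenerate value $c = 0$ (i.e.\ $\theta = 0$) separately: there $h_3 = 0$ for every $\eps$, so $\Exp_{\eps}(\theta, \f, t)$ is the straight line $(t \cos \f, t \sin \f, 0)$ independently of $\eps$, and this coincides with $\Exp_0(0, \psi, t)$ given by \eq{xyzt4}, so no limiting argument is needed. I do not expect a genuine obstacle in this proposition: the entire content is an exercise in taking termwise limits of elementary expressions. The only points requiring care are bookkeeping --- keeping track of which quantities are held fixed ($c = h_3$, and hence $\tau = ct$) versus which tend to zero ($\sin \theta = \eps c$, the factor $1 - \eps^2 h_3^2$, and $\eps^2 \tau$) as $\eps \to 0$ --- together with the trigonometric rearrangement matching the limits of $x$ and $y$ to \eq{xt4}, \eq{yt4}. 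Guaranteeing that the denominators $c$ and $2 c^2$ do not degenerate is precisely why the value $c = 0$ must be treated on its own.
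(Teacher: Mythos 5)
Your termwise limits are all computed correctly, but you are not proving the same limit statement as the paper: because the change of variables \eq{change} itself depends on $\eps$, the proposition couples $\theta$, $c$ and $\eps$, and you have silently transposed which variable is held fixed. You fix $c$ and let $\theta=\arcsin(\eps c)\to 0$, which is exactly what makes $h_3=c$ and $\tau=ct$ independent of $\eps$ and reduces everything to termwise limits. The proposition as stated (and the paper's proof) fixes $\theta$; then $c(\theta)=\sin\theta/\eps\to\pm\infty$, $\tau=t\sin\theta/\eps$ is unbounded, and \emph{both} sides of the claimed equality move with $\eps$ (for fixed $\theta\neq 0$ both in fact collapse to the origin). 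In that regime your decisive observation --- that $\tau$ becomes independent of $\eps$ --- is false, so your argument does not cover the case that is literally stated. The paper instead writes the difference of the two sides in closed form and checks that it vanishes as $\eps\to0$ with $\theta$ fixed; a clean version of that computation gives
\begin{align*}
x_{\eps}-x_{0}&=-\eps\tan\tfrac{\theta}{2}\,\bigl(\sin(\f+\tau)-\sin\f\bigr),\\
y_{\eps}-y_{0}&=-\eps\tan\tfrac{\theta}{2}\,\bigl(\cos\f-\cos(\f+\tau)\bigr),\\
z_{\eps}-z_{0}&=\tfrac{\eps^{2}}{2}\,(\tau+\sin\tau)=\tfrac{\eps}{2}\bigl(t\sin\theta+\eps\sin\tau\bigr),
\end{align*}
each of which is $O(\eps)$ for fixed $\theta$.

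This said, your reading is not unreasonable: it is precisely the form in which the proposition is later invoked (in the lower semicontinuity of spheres, a point $q=\Exp_0(c,\psi,r)\in S_0(r)$ is fixed, hence $c$ is fixed and $\theta(c)$ varies with $\eps$), whereas under the paper's reading the content for $\theta\neq0$ is only that both sides tend to the origin. Moreover, the two regimes are reconciled by the closed-form differences above: since $|\tan(\theta/2)|\leq 1$ on $[-\pi/2,\pi/2]$, all three differences are $O(\eps)$ \emph{uniformly} in $(\theta,\f)$ for fixed $t$, which covers your coupling and the paper's simultaneously. But as submitted, your proof establishes $\lim_{\eps\to 0}\Exp_{\eps}(\arcsin(\eps c),\f,t)=\Exp_0(c,\f,t)$ for fixed $c$, not the stated assertion with $\theta$ fixed; to close this gap you should either prove the uniform estimate above, or treat the fixed-$\theta$ regime separately.
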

\begin{proof}
Обозначим $\Exp_{\eps}(\theta, \f, t) = (x_{\eps}, y_{\eps}, z_{\eps})$, \\
$ \restr{\Exp_0(c, \psi, t)}{c = c(\theta), \ \psi = \psi(\f)} = (x_0, y_0, z_0)$.  Если $\theta \in (-\frac{\pi}{2}, \frac{\pi}{2})$, то
при $\eps \to 0$
\begin{align*}
&x - x_0 = 
\frac{2 \eps}{|\cos \theta|} 
\left(\cos \left(\f - \frac{t \sin \theta}{2 \eps}\right) + 
|\cos \theta| \cos\left(\f + \frac{t \sin \theta}{2 \eps}\right)\right) \ctg \theta \sin \frac{t \sin \theta}{2 \eps} \to 0,\\
&y-y_0 = 
\frac{\eps \ctg \theta}{|\cos \theta|} 
\left(\cos \f |\cos \theta| - 1\right) + (1 - |\cos \theta|)
\left(\cos\left(\f + \frac{t \sin \theta}{\eps}\right) + \sin \f\right)  \to 0,\\
&z - z_0 = \frac{\eps}{2}
\left(t \sin \theta + \eps \sin\left(\frac{t \sin \theta}{\eps}\right)\right)  \to 0. 
\end{align*}

Если $\theta = 0$,  то $x = x_0$, $y = y_0$, $z = z_0$.
\end{proof}

Некоторые оптимальные траектории в задачах $P_1$  и $P_0$  приведены соответственно на рис. \ref{fig:traj}  и \ref{fig:traj0}.

\twofiglabelsize
{traj.pdf}{Оптимальные траектории в задаче $P_{\eps}$, $\eps = 1$}{fig:traj}{0.8}
{traj0.pdf}{Оптимальные траектории в задаче $P_{0}$}{fig:traj0}{0.6}

\subsection{Сходимость первой каустики и множества разреза}

Обозначим через $\Conje$  и $\Cut_{\eps}$  первую каустику и множество разреза для задачи $P_{\eps}$. Мы показали в теореме \ref{th:conjcut}, что
\be{conjeps}
\Conje = \Cut_{\eps} = \{(0, 0, z) \mid |z| \geq 2 \pi \eps^2 \}.
\ee
Для задачи $P_0$  первая каустика и множество разреза равны
\be{conj0}
\Conjo = \Cut_0 =  \{(0, 0, z) \mid |z| \neq 0 \}.
\ee

Обозначим через $\chi_A$   характеристическую функцию множества $A$. 

\begin{proposition}
\begin{itemize}
\item[$(1)$]
Если $0 \leq \eps_1 < \eps_2$,  то $ \Cut_{\eps_1} \supset \Cut_{\eps_2}$.
\item[$(2)$]
Для любого $q \in M$ $\displaystyle\lim_{\eps \to 0} \chi_{\Cut_{\eps}}(q) = \chi_{\Cut_{0}}(q)$.
\end{itemize}
\end{proposition}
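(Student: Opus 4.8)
The plan is to reduce everything to the explicit closed-form descriptions of the cut loci that are already in hand, namely $\Cut_{\eps} = \{(0,0,z) \mid |z| \geq 2\pi\eps^2\}$ from \eq{conjeps} and $\Cut_0 = \{(0,0,z) \mid z \neq 0\}$ from \eq{conj0}. Every cut set lives on the $z$-axis, so I expect both claims to follow from a direct comparison of the threshold values $2\pi\eps^2$ on that axis; there is no genuine analytic obstacle once these formulas are granted, and the argument is really just bookkeeping.

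For part (1) I would split into two cases. If $0 < \eps_1 < \eps_2$, then $2\pi\eps_1^2 < 2\pi\eps_2^2$, so the condition $|z| \geq 2\pi\eps_2^2$ is strictly more restrictive than $|z| \geq 2\pi\eps_1^2$; hence $\Cut_{\eps_2} \subset \Cut_{\eps_1}$. If $\eps_1 = 0 < \eps_2$, then any point $(0,0,z)$ with $|z| \geq 2\pi\eps_2^2 > 0$ in particular satisfies $z \neq 0$, so $\Cut_{\eps_2} \subset \Cut_0$. In both cases $\Cut_{\eps_1} \supset \Cut_{\eps_2}$, which is the asserted nesting, monotone in $\eps$.

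For part (2) I would fix $q = (x,y,z)$ and argue pointwise by cases. If $x^2 + y^2 > 0$, or if $q = q_0 = (0,0,0)$, then $q$ lies in none of the sets $\Cut_{\eps}$, $\eps \geq 0$, since each of them is contained in $\{(0,0,z) \mid z \neq 0\}$; thus $\chi_{\Cut_{\eps}}(q) = 0 = \chi_{\Cut_0}(q)$ for every $\eps > 0$ and the limit is trivially correct. The only remaining case is $q = (0,0,z)$ with $z \neq 0$, for which $\chi_{\Cut_0}(q) = 1$. Here I would note that $q \in \Cut_{\eps}$ precisely when $\eps \leq \sqrt{|z|/(2\pi)}$, so for all sufficiently small $\eps$ one has $\chi_{\Cut_{\eps}}(q) = 1$, whence $\lim_{\eps \to 0}\chi_{\Cut_{\eps}}(q) = 1 = \chi_{\Cut_0}(q)$. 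This exhausts all points of $M$.

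The only point worth flagging is that the convergence in part (2) is pointwise but not uniform: the expanding family $\Cut_{\eps}$ fills out $\Cut_0$ from the outside as $\eps \to 0$, and near the puncture at the origin the interval $\{(0,0,z) \mid 0 < |z| < 2\pi\eps^2\}$ is always missing, which is exactly why one states the result through characteristic functions rather than as a set equality.
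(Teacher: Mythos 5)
Your proof is correct and follows essentially the same route as the paper: both reduce the claim to the explicit descriptions $\Cut_{\eps} = \{(0,0,z) \mid |z| \geq 2\pi\eps^2\}$ and $\Cut_0 = \{(0,0,z) \mid z \neq 0\}$ and then do the same case analysis on $q$, with your treatment of the case $q=(0,0,z)$, $z\neq 0$, matching the paper's final case (where the paper's text contains an evident typo, writing $x^2+y^2\neq 0$ instead of $x^2+y^2=0$, which you implicitly correct). Your closing remark on the lack of uniformity is a sensible addition but not required.
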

\begin{proof}
(1) следует из формул \eq{conjeps}, \eq{conj0}.

(2) Пусть $q = (x, y, z) \in M$. Если $x^2 + y^2 \neq 0$, то $\chi_{\Cut_{\eps}}(q) = \chi_{\Cut_{0}}(q) = 0$. 
Если $x^2 + y^2 =z =0$, то $\chi_{\Cut_{\eps}}(q) = \chi_{\Cut_{0}}(q) = 0$.
Наконец, пусть  $x^2 + y^2 \neq 0$ и $z \neq 0$. Тогда $\chi_{\Cut_{0}}(q) = 1$. Если  $\eps \leq \sqrt{\frac{|z|}{2 \pi}}$, то $|z| \geq 2 \pi \eps^2$, поэтому  $\chi_{\Cut_{\eps}}(q) = 1$.
\end{proof}

\subsection{Сходимость сфер}
Риманова сфера радиуса $r > 0$  в задаче $P_{\eps}$, $\eps > 0$,  параметризуется следующим образом:
\be{pare}
S_{\eps}(r) = \{\Exp_{\eps}(\theta, \f, r) \mid r|\sin \theta| \leq 2 \pi \eps, \ \f \in \R/(2 \pi \Z)\},
\ee
 а субриманова сфера радиуса $r > 0$  в задаче $P_0$ --- следующим:
$$
S_{0}(r) = \{\Exp_{0}(c, \psi, r) \mid r|c| \leq 2 \pi, \ \psi \in \R/(2 \pi \Z)\},
$$
или, с учетом замены переменных \eq{change},
\be{par0}
S_{0}(r) = \left\{\Exp_{0}\left(\frac{\sin \theta}{\eps}, \f, r\right) \mid r|\sin \theta| \leq 2 \pi \eps, \ \f \in \R/(2 \pi \Z)\right\}.
\ee

\begin{proposition}
Параметризация \eq{pare}  сферы $S_{\eps}(r)$  сходится к параметризации \eq{par0}  сферы $S_0(r)$  при $\eps \to 0$.
\end{proposition}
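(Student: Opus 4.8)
План состоит в том, чтобы понимать сходимость параметризаций как равномерную сходимость на общей области определения. Обе параметризации \eq{pare} и \eq{par0} заданы на одном и том же множестве $D_{\eps} = \{(\theta, \f) \mid r|\sin\theta| \leq 2\pi\eps, \ \f \in \R/(2\pi\Z)\}$, поэтому сравнивать я бы стал отображения $(\theta, \f) \mapsto \Exp_{\eps}(\theta, \f, r)$ и $(\theta, \f) \mapsto \Exp_0(\frac{\sin\theta}{\eps}, \f, r)$ именно на $D_{\eps}$. Непосредственно сослаться на предложение \ref{prop:Expe} не получится, так как там $\theta$ фиксировано, а здесь область $D_{\eps}$ сама зависит от $\eps$; значит, нужны оценки, равномерные по $D_{\eps}$. Ключевое соображение: на $D_{\eps}$ выполнено $|\sin\theta| \leq 2\pi\eps/r$, так что $\theta \to 0$ равномерно.

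Сначала я бы выписал явные формулы. Обозначив $\Exp_{\eps}(\theta, \f, r) = (x_{\eps}, y_{\eps}, z_{\eps})$, $\Exp_0(\frac{\sin\theta}{\eps}, \f, r) = (x_0, y_0, z_0)$, $h_3 = \frac{\sin\theta}{\eps}$, $\tau = h_3 r$, из \eq{xt}, \eq{yt} и \eq{xt4}, \eq{yt4} элементарным тригонометрическим преобразованием я ожидаю получить тождества $x_{\eps} = \cos\theta \cdot x_0$ и $y_{\eps} = \cos\theta \cdot y_0$, откуда $x_{\eps} - x_0 = (\cos\theta - 1)x_0$, $y_{\eps} - y_0 = (\cos\theta - 1)y_0$. Далее я бы оценил сомножители: из $|\sin(\f+\tau) - \sin\f| \leq |\tau| = |h_3|\,r$ следует $|x_0| \leq r$ и аналогично $|y_0| \leq r$ равномерно по $D_{\eps}$, а из $0 \leq 1 - \cos\theta \leq \sin^2\theta \leq 4\pi^2\eps^2/r^2$ --- оценка $|x_{\eps} - x_0|, |y_{\eps} - y_0| \leq 4\pi^2\eps^2/r \to 0$.

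Координату $z$ я бы обработал так же, как в доказательстве предложения \ref{prop:Expe}: прямое вычитание \eq{zt} и \eq{zt4} (при $c = h_3$, $ct = \tau$) даёт
$$
z_{\eps} - z_0 = \frac{\eps}{2}\left(r\sin\theta + \eps\sin\frac{r\sin\theta}{\eps}\right),
$$
откуда на $D_{\eps}$ получается $|z_{\eps} - z_0| \leq \frac{\eps}{2}(2\pi\eps + \eps) = \frac{(2\pi+1)\eps^2}{2} \to 0$ равномерно. Случай $\theta = 0$ ($h_3 = 0$) я разобрал бы отдельно по прямолинейным формулам; он даёт точное равенство $\Exp_{\eps} = \Exp_0$.

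Главная трудность, на мой взгляд, не вычислительная, а в обеспечении равномерности: область $D_{\eps}$ сжимается вместе с $\eps$, поэтому существенно, что ограничение $r|\sin\theta| \leq 2\pi\eps$ одновременно делает малыми и $1 - \cos\theta$, и оба слагаемых в выражении для $z_{\eps} - z_0$, тогда как границы $|x_0|, |y_0| \leq r$ от $\eps$ вообще не зависят. В итоге я рассчитываю получить равномерную сходимость со скоростью порядка $\eps^2$.
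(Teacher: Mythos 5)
Ваше доказательство верно, но идёт существенно иным путём, нежели авторское. В статье доказательство занимает одну строку: применить предложение \ref{prop:Expe} при $t = r$, т.е. сослаться на уже доказанную поточечную (при фиксированных $(\theta, \f)$) сходимость к нулю разности $\Exp_{\eps}(\theta, \f, r) - \Exp_0\left(\frac{\sin\theta}{\eps}, \f, r\right)$; вопрос, который вас беспокоит --- что область параметризации $D_{\eps}$ сжимается вместе с $\eps$ и фиксированная точка $(\theta, \f)$ с $\theta \neq 0$ при малых $\eps$ покидает $D_{\eps}$, --- в статье никак не затрагивается, поскольку само понятие <<сходимости параметризаций>> там не уточняется. Вы вместо этого доказываете равномерную на $D_{\eps}$ сходимость с явной скоростью порядка $\eps^2$. Ваши выкладки корректны: тождества $x_{\eps} = \cos\theta \cdot x_0$ и $y_{\eps} = \cos\theta \cdot y_0$ действительно следуют из равенств $\sin(\f + \tau) - \sin\f = \cos\f \sin\tau + \sin\f(\cos\tau - 1)$ и $\cos\f - \cos(\f+\tau) = \sin\f\sin\tau - \cos\f(\cos\tau - 1)$ при $\tau = r\sin\theta/\eps$; ваша формула для $z_{\eps} - z_0$ совпадает с формулой из доказательства предложения \ref{prop:Expe}; оценки $|x_0|, |y_0| \leq r$, $0 \leq 1 - \cos\theta \leq \sin^2\theta \leq 4\pi^2\eps^2/r^2$ и $|z_{\eps}-z_0| \leq \frac{\eps}{2}(2\pi\eps+\eps)$ на $D_{\eps}$ верны и дают заявленные границы. Итог сравнения: авторский подход минимален --- он лишь переиспользует готовое предложение \ref{prop:Expe} и потому не требует новых вычислений, но оставляет смысл сходимости неформализованным (и неявно игнорирует зависимость области от $\eps$); ваш подход дороже по выкладкам, зато даёт более сильное и более содержательное утверждение --- равномерную близость параметризаций именно на их общей, зависящей от $\eps$, области определения, что и является естественной трактовкой сходимости параметризаций при сжимающейся области.
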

\begin{proof}
Достаточно воспользоваться предложением \ref{prop:Expe}:
$$
\lim_{\eps \to 0} \Exp_{\eps}(\theta, \f, r) = \Exp_0\left(\frac{\sin \theta}{\eps}, \f, r\right).
$$
\end{proof}

Напомним необходимые определения о сходимости множеств \cite{aubin}. Пусть дано семейство множеств $A_{\eps}$, $\eps \geq 0$,  в метрическом пространстве $(M, \rho)$. Нижний и верхний пределы семейства $A_{\eps}$  при $\eps \to 0$  определяются соответственно как
$$
\liminf_{\eps \to 0} A_{\eps} := \{ q \in M \mid \lim_{\eps \to 0} \rho(q, A_{\eps}) = 0\}
$$
и
$$
\limsup_{\eps \to 0} A_{\eps} := \{ q \in M \mid \liminf_{\eps \to 0} \rho(q, A_{\eps}) = 0\},
$$
где $\rho(q, A) := \inf_{p \in A} \rho(q, p)$. Семейство $A_{\eps}$ называется полунепрерывным снизу (сверху)  при $\eps \to 0$  по Куратовскому, если $A_0 \subset \liminf_{\eps\to 0} A_{\eps}$ (соотв. $A_0 \supset \limsup_{\eps\to 0} A_{\eps}$). Наконец, семейство множеств непрерывно, если оно полунепрерывно снизу и сверху.

\begin{proposition}
Для любого $r > 0$ семейство сфер $S_{\eps}(r)$  полунепрерывно снизу при $\eps \to 0$ по Куратовскому.
\end{proposition}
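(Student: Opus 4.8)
The plan is to unwind the Kuratowski definition and reduce lower semicontinuity to an explicit approximation. By definition, the family $S_{\eps}(r)$ is lower semicontinuous as $\eps \to 0$ precisely when $S_0(r) \subset \liminf_{\eps \to 0} S_{\eps}(r)$, that is, when $\lim_{\eps \to 0} \rho(q, S_{\eps}(r)) = 0$ for every $q \in S_0(r)$. Since $\rho(q, S_{\eps}(r)) \leq \rho(q, q_{\eps})$ for any choice of $q_{\eps} \in S_{\eps}(r)$, it suffices to produce, for each fixed $q \in S_0(r)$, a family $q_{\eps} \in S_{\eps}(r)$ with $q_{\eps} \to q$ as $\eps \to 0$.

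To build this family I would use the parameter correspondence \eq{change}. Fix $q \in S_0(r)$ and write $q = \Exp_0(c_0, \psi_0, r)$ with $r|c_0| \leq 2\pi$. For small $\eps$ set $\theta_{\eps} = \arcsin(\eps c_0)$ and $\f_{\eps} = \psi_0$; this is legitimate as soon as $\eps |c_0| \leq 1$, and it realizes exactly $\sin\theta_{\eps}/\eps = c_0$. The admissibility constraint in \eq{pare} then reads $r|\sin\theta_{\eps}| = \eps\, r|c_0| \leq 2\pi\eps$, so $(\theta_{\eps}, \f_{\eps})$ lies in the parameter domain of $S_{\eps}(r)$ and $q_{\eps} := \Exp_{\eps}(\theta_{\eps}, \f_{\eps}, r) \in S_{\eps}(r)$.

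The remaining step is the convergence $q_{\eps} \to q$, which I would verify by direct substitution into \eq{xt}--\eq{zt}. The point of the correspondence is that the frequency $h_3 = \sin\theta_{\eps}/\eps = c_0$ and the phase $\tau = h_3 r = c_0 r$ are frozen, independent of $\eps$; the only $\eps$-dependence left sits in $\cos\theta_{\eps} = \sqrt{1 - \eps^2 c_0^2} \to 1$ and in the explicit factors $1 - \eps^2 h_3^2 \to 1$ and $\eps^2\tau \to 0$ of \eq{zt}. Passing to the limit term by term, the expressions \eq{xt}--\eq{zt} reduce to \eq{xt4}--\eq{zt4} evaluated at $c = c_0$, $\psi = \psi_0$, i.e. to $q$. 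When $c_0 = 0$ one has $\theta_{\eps} = 0$ for all $\eps$ and $\Exp_{\eps}(0, \psi_0, r) = \Exp_0(0, \psi_0, r) = q$ exactly, as already observed in Proposition \ref{prop:Expe}. In either case $\rho(q, S_{\eps}(r)) \to 0$, hence $q \in \liminf_{\eps \to 0} S_{\eps}(r)$, which is the claim.

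The main obstacle, and the reason I would not simply quote Proposition \ref{prop:Expe}, is that the convergence there is stated for a fixed $\theta \neq 0$, which corresponds to the regime $c = \sin\theta/\eps \to \infty$; by contrast, every point of the sphere $S_0(r)$ forces the complementary regime $\theta_{\eps} \to 0$ with $c_0 = \sin\theta_{\eps}/\eps$ bounded (indeed $r|c_0| \leq 2\pi$). In this diagonal regime the difference estimates used to prove Proposition \ref{prop:Expe} no longer manifestly vanish, since the factor $\eps \cot\theta_{\eps}$ stays of order one; the limit must therefore be taken directly in the frozen-frequency form above, where a hidden cancellation (the overall factor $\cos\theta_{\eps} - 1 = O(\eps^2)$) makes the convergence transparent. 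Once this is recognised the computation is elementary, and I expect no further difficulty.
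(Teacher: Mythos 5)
Your proposal is correct, and its skeleton coincides with the paper's own proof: the paper likewise takes $q = \Exp_0(c,\psi,r) \in S_0(r)$, sets $q_{\eps} = \Exp_{\eps}(\theta(c), \f(\psi), r) \in S_{\eps}(r)$ by inverting \eq{change}, and deduces $S_0(r) \subset \liminf_{\eps\to 0} S_{\eps}(r)$ from $q_{\eps} \to q$. The divergence is in how that limit is justified: the paper simply cites Proposition \ref{prop:Expe}, whereas you recompute the limit directly in the frozen-frequency form ($h_3 = c_0$, $\tau = c_0 r$ independent of $\eps$, all remaining $\eps$-dependence in $\cos\theta_{\eps}\to 1$, $1-\eps^2c_0^2 \to 1$, $\eps^2\tau \to 0$). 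Your reservation about quoting that proposition is well founded: it is stated for fixed $\theta$, i.e. the regime $c = \sin\theta/\eps \to \infty$, while every point of $S_0(r)$ forces the diagonal regime $c_0$ fixed, $\theta_{\eps} = \arcsin(\eps c_0) \to 0$, which the statement does not literally cover; moreover the formulas printed in its proof for $x - x_0$ and $y - y_0$ carry prefactors of the type $\eps\ctg\theta$, which along the diagonal are only $O(1)$, so the vanishing is indeed not manifest there. It comes from exactly the cancellation you name: one has $x_{\eps} - x_0 = \frac{2\eps(\cos\theta_{\eps}-1)}{\sin\theta_{\eps}}\cos\left(\f + \frac{\tau}{2}\right)\sin\frac{\tau}{2}$ and an analogous expression for $y_{\eps} - y_0$, so the factor $\cos\theta_{\eps}-1 = O(\eps^2)$ does all the work, while $z_{\eps} - z_0 = \frac{\eps}{2}\left(\eps c_0 r + \eps\sin(c_0 r)\right) = O(\eps^2)$ directly. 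So yours is not a different method, but it closes a genuine, if small, gap: it proves the needed convergence in the regime where the paper applies Proposition \ref{prop:Expe} outside its stated hypotheses, and it also records the admissibility checks ($\eps|c_0|\leq 1$ and $r|\sin\theta_{\eps}| = \eps r|c_0| \leq 2\pi\eps$, so that indeed $q_{\eps}\in S_{\eps}(r)$) that the paper leaves implicit. The cost is a few lines of computation; the benefit is a self-contained argument at the only point where the published proof is loose.
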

\begin{proof}
Пусть $q \in S_0(r)$, тогда $q = \Exp(c, \psi, r)$  для некоторых $c \in \R$, $\psi \in \R/(2 \pi \Z)$. Положим $q_{\eps} = \Exp_{\eps}(\theta(c), \f(\psi), r) \in S_{\eps}(r)$. По предложению~\ref{prop:Expe}  имеем $\lim_{\eps\to 0} q_{\eps} = q_0$,  поэтому $S_{0}(r) \subset \liminf_{\eps\to 0} S_{\eps}(r)$.
\end{proof}

Сферы в задачах $P_{\eps}$, $\eps \geq 0$,  являются поверхностями вращения вокруг оси $z$.
Сечения плоскостью $\{y = 0\}$
субримановых сфер в задаче $P_0$  радиусов $2, 1, 0,\!5, 0,\!25$  изображены на рис. \ref{fig:spheps0.pdf}. 
Сечения плоскостью $\{y = 0\}$
римановых сфер в задаче $P_{\eps}$, $\eps = 0,\!1$  радиусов $2, 1, 0,\!5, 0,\!25$  изображены на рис. \ref{fig:spheps01.pdf}. 
Сечения плоскостью $\{y = 0\}$
римановых сфер в задаче $P_{\eps}$, $\eps = 1$  радиусов $2, 1, 0,\!5, 0,\!25$  изображены на рис. \ref{fig:spheps1.pdf}.
Также на рис. \ref{fig:spheps0.pdf}--\ref{fig:spheps1.pdf} изображено множество разреза в соответствующей задаче.

На рис. \ref{fig:spheps1r1.pdf} и \ref{fig:spheps1r2pi.pdf} изображены римановы сферы радиуса 1 и $2 \pi$ в задаче  $P_{\eps}$, $\eps = 1$.

Наконец, на рис. \ref{fig:spheps0r01.pdf} и \ref{fig:spheps01r01.pdf} изображены сферы радиуса $0,\!1$  в задачах $P_0$ и $P_{\eps}$, $\eps = 0,\!1$.

Рисунки \ref{fig:spheps0.pdf},  \ref{fig:spheps01.pdf}, \ref{fig:spheps0r01.pdf}, \ref{fig:spheps01r01.pdf}  позволяют выдвинуть гипотезу, что семейство сфер $S_{\eps}(r)$  непрерывно по Куратовскому при $\eps \to 0$.

\twofiglabelsize
{spheps0.pdf}{Субримановы сферы в задаче $P_0$}{fig:spheps0.pdf}{0.3}
{spheps01.pdf}{Римановы сферы в задаче $P_{\eps}$, $\eps = 0,\!1$}{fig:spheps01.pdf}{0.3}

\onefiglabelsizen
{spheps1.pdf}{Римановы сферы в задаче $P_{\eps}$, $\eps = 1$}{fig:spheps1.pdf}{8}

\twofiglabelsize
{spheps1r1.pdf}{Риманова сфера радиуса 1 в задаче $P_{\eps}$, $\eps = 1$}{fig:spheps1r1.pdf}{0.6}
{spheps1r2pi.pdf}{Риманова сфера радиуса $2\pi$ в задаче $P_{\eps}$, $\eps = 1$}{fig:spheps1r2pi.pdf}{0.6}

\twofiglabelsize
{spheps01R1.pdf}{Субриманова сфера радиуса $0,\!1$ в задаче $P_{0}$}{fig:spheps0r01.pdf}{0.3}
{spheps01R1.pdf}{Риманова сфера радиуса $0,\!1$ в задаче $P_{\eps}$, $\eps = 0,\!1$}{fig:spheps01r01.pdf}{0.3}

\end{document}